\newtheorem{thm}{Theorem}[section]
\newtheorem{prop}[thm]{Proposition}
\newtheorem{defn}[thm]{Definition}
\newtheorem{lem}[thm]{Lemma}
\newcommand{\Hm}{\mathcal{H}}
\newcommand{\abb}[1]{{#1}_{\alpha\bar{\beta}}}
\newcommand{\uabb}[1]{{#1}^{\alpha\bar{\beta}}}
\newcommand{\ua}[1]{{#1}^{\alpha}}
\newcommand{\ubb}[1]{\overline{{#1}^{\beta}}}
\newcommand{\bfb}{\bar{\beta}}
\newcommand{\bfa}{\bar{\alpha}}
\newcommand{\bfg}{\overline{\gamma}}
\newcommand{\lapx}{\tilde{\lap}}
\renewcommand{\Re}{\text{Re}}
\DeclareMathOperator{\osc}{osc}
\begin{document}


\title{Complex Hessian Equation on Kahler Manifold}
\author{Zuoliang Hou}
\address{Mathematics Department, Columbia University, New York, NY 10027}
\email{hou@math.columbia.edu}
\date{\today}

\maketitle

\section{Introduction}

Let $(M,\fo)$ be a compact \KH{} manifold of complex dimension
$n$. In 1978, Yau~\cite{Yau1978} proved the famous Calabi-Yau
conjecture by solving following complex \MA{} equation on $M$
$$  (\fo+\ii\ddb\vff)^n = f \fo^n $$
with positive function $f$. Later on, using tools from pluri-potential
theory, Ko{\l}odziej~\cite{Kolodziej1998} studied the same equation with
weaker smoothness assumption on $f$.

\medskip
In this paper, we consider following complex Hessian equation on
$(M,\fo)$: 
\begin{equation}
  \fo_{\vff}^k \wedge \fo^{n-k} = 
  (\fo+\ii\ddb\vff)^k \wedge \fo^{n-k} = f \fo^n, \label{eq:1}
\end{equation}
where $k$ is a fixed integer between $2$ and $n-1$, and $f$ is a
non-negative function on $M$ satisfying the compatibility condition:
\begin{equation}
  \int_M f \fo^n = \int_M \fo^n.  \label{eq:2}
\end{equation}
Noticed that if $k=n$, equation (\ref{eq:1}) is just the complex \MA{}
equation, while if $k=1$, equation (\ref{eq:1}) becomes the Laplacian
equation. So equation~(\ref{eq:1}) is a generalization of both complex
\MA{} equation and Laplacian equation over compact \KH{} manifold.

\medskip
Similar nonlinear equations have been studied extensively by many
authors, see \cite{BedfordTaylor1976, CNS1983, CKNS1985, CNS1985,
Li1990, Trudinger1995, GuanLi1996, TrudingerWang1999, Li2004,
Blocki2005} and the reference there.

\bigskip
The Main result of this paper is following 
\begin{thm} Let $(M,\fo)$ be a compact \KH{} manifold with
  non-negative holomorphic bisectional curvature, and $f$ is a strictly
  positive smooth function, then equation (\ref{eq:1}) has smooth solution
  unique up to a constant. 
  \label{thm:1.1}
\end{thm}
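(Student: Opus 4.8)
The plan is a continuity-method argument modeled on Yau's solution of the Calabi conjecture, with the Monge--Amp\`ere operator replaced by the $k$-th elementary symmetric function. Write $\omega_\varphi = \omega + \sqrt{-1}\,\partial\bar\partial\varphi$, and call $\omega_\varphi$ \emph{admissible} if its eigenvalues relative to $\omega$ lie in the G{\aa}rding cone $\Gamma_k = \{\lambda : \sigma_1(\lambda) > 0, \dots, \sigma_k(\lambda) > 0\}$; then $\binom{n}{k}\,\omega_\varphi^k\wedge\omega^{n-k} = \sigma_k(\lambda)\,\omega^n$ and the equation is elliptic. For $t \in [0,1]$ set $f_t = tf + (1-t)$, so each $f_t$ is strictly positive and satisfies (\ref{eq:2}), with $f_0 \equiv 1$ and $f_1 = f$. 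Let $S$ be the set of $t$ for which $\omega_{\varphi_t}^k\wedge\omega^{n-k} = f_t\,\omega^n$ has a solution $\varphi_t \in C^{2,\alpha}(M)$ with $\omega_{\varphi_t}$ admissible and $\int_M \varphi_t\,\omega^n = 0$. Then $0 \in S$ via $\varphi_0 \equiv 0$, and it suffices to show $S$ is open and closed in $[0,1]$.

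Openness is the implicit function theorem: at a solution the linearized operator is $L_t u = \sum_{i,\bar j} F^{i\bar j}\,\partial_i\partial_{\bar j}u$, where $(F^{i\bar j}) = \partial\sigma_k/\partial(\omega_{\varphi_t})_{i\bar j}$ is positive definite (because $\lambda \in \Gamma_k$) and there is no zeroth-order term, so the strong maximum principle and Fredholm theory make $L_t$ an isomorphism between $\{u \in C^{2,\alpha} : \int u\,\omega^n = 0\}$ and $\{g \in C^{0,\alpha} : \int g\,\omega^n = 0\}$; perturbing $t$ keeps solvability and admissibility. Closedness reduces to a priori estimates, uniform in $t$, for admissible solutions with $\int_M\varphi\,\omega^n = 0$. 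I would establish, in order: (i) a $C^0$ bound $\|\varphi\|_{C^0}\le C$ by Moser iteration --- testing the equation against suitable powers of $\varphi$, integrating by parts, and using that $\omega_\varphi^j\wedge\omega^{n-j}$ is a non-negative form for $0\le j\le k$ together with the Sobolev inequality on $(M,\omega)$; (ii) a gradient bound $\|\nabla\varphi\|_{C^0}\le C$; (iii) the second-order estimate $\sup_M(n+\Delta\varphi)\le C$, obtained by applying $L$ to $e^{-A\varphi}(n+\Delta\varphi)$, evaluating at an interior maximum, using the concavity of $\sigma_k^{1/k}$ to dominate the third-derivative terms, and invoking the hypothesis of non-negative holomorphic bisectional curvature to kill the curvature terms produced when commuting covariant derivatives --- this is exactly where that curvature hypothesis enters, and where the $C^1$ bound of (ii) is absorbed; (iv) since $n+\Delta\varphi$ is then bounded above and, by admissibility, below, the equation is uniformly elliptic with concave structure, so the Evans--Krylov theorem gives a $C^{2,\alpha}$ estimate, and differentiating the equation and iterating Schauder estimates yields uniform $C^m$ bounds for every $m$. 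Arzel\`a--Ascoli then gives closedness, hence $S = [0,1]$, and elliptic regularity makes $\varphi_1$ a smooth solution of (\ref{eq:1}).

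For uniqueness, if $\varphi$ and $\psi$ both solve (\ref{eq:1}), integrate the differential of $\sigma_k^{1/k}$ along the segment from $\omega_\psi$ to $\omega_\varphi$ (which remains in $\Gamma_k$ by convexity of the cone) to obtain $\sum_{i,\bar j} a^{i\bar j}\,\partial_i\partial_{\bar j}(\varphi - \psi) = 0$ with $(a^{i\bar j})$ positive definite and no zeroth-order term; the strong maximum principle forces $\varphi - \psi$ to be constant. The step I expect to be the main obstacle is (iii): for $k < n$ the function $\sigma_k$ lacks the multiplicative structure of the determinant, so controlling the term $\sum F^{i\bar j,\,p\bar q}\nabla_\ell(\omega_\varphi)_{i\bar j}\,\nabla_{\bar\ell}(\omega_\varphi)_{p\bar q}$ and organizing the commutator curvature terms so that non-negativity of the bisectional curvature suffices is the technical heart of the proof.
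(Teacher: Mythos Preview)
Your overall strategy---continuity method, openness by the implicit function theorem, closedness by the chain $C^0 \Rightarrow C^2 \Rightarrow C^{2,\alpha}$ via Evans--Krylov, and smoothness by Schauder---matches the paper exactly, and your identification of the curvature commutator in the $C^2$ step as the place where non-negative bisectional curvature enters is correct.

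Two points of departure are worth flagging. First, the paper's $C^2$ estimate does \emph{not} absorb a gradient bound: the test function is simply $n+\Delta\varphi - \varphi$ (not the exponential barrier $e^{-A\varphi}(n+\Delta\varphi)$), and after using concavity of $S_k^{1/k}$ and the curvature sign the resulting bound depends only on $\sup f$, $\inf\Delta(f^{1/k})$, and $\osc\varphi$. So step (ii) in your outline is logically unnecessary for (iii); the paper proves a separate $C^1$ estimate (adapting B{\l}ocki's barrier $\log|\nabla\varphi|^2 - h(\varphi)$), but the $C^2$ argument is self-contained. Your exponential test function would also work, but it produces extra gradient terms you then have to control, which is avoidable here. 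Second, your uniqueness argument (linearize along the segment and apply the strong maximum principle) is correct but different from the paper's: the paper instead writes
\[
0=\int_M(\psi-\varphi)\bigl(\omega_\psi^k-\omega_\varphi^k\bigr)\wedge\omega^{n-k}
= -\sum_{l=0}^{k-1}\int_M \sqrt{-1}\,\partial(\psi-\varphi)\wedge\bar\partial(\psi-\varphi)\wedge\omega_\psi^{\,l}\wedge\omega_\varphi^{\,k-1-l}\wedge\omega^{n-k},
\]
and uses the G{\aa}rding-cone positivity of each summand to force $\partial(\psi-\varphi)=0$. Both routes are standard; the integration-by-parts version has the advantage of not needing the full $C^2$ regularity of the segment interpolants.
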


Our approach to Theorem~\ref{thm:1.1} is similar to Yau's approach to
the complex \MA{} equation, i.e. continuity method and \emph{a priori}
estimate. By the standard theory of Krylov and Evans, it suffices to
prove \emph{a priori} $C^2$ estimate for equation (\ref{eq:1}). More
precisely, we have 

\begin{prop} If $\vff$ solves equation (\ref{eq:1}) and $\sup_M \vff =0$,
  then $\forall\,q > 2n$, 
  \begin{equation}
	\norm{\vff}_{C^0} \leq C_q, \label{eq:3}
  \end{equation}
  where $C_q$ is a constant depends on $(M,\fo)$, $q$ and
  $\norm{f-1}_{L^q}$.
  \label{prop:1.2}
\end{prop}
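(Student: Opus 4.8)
The plan is to run a Moser iteration on $u:=-\vff\ge 0$, starting from an $L^1$ bound obtained from the Green's function of $(M,\fo)$ and using the Sobolev inequality of the $2n$-dimensional manifold $M$ together with a positivity property of the mixed forms $\fo_\vff^{\,j}\wedge\fo^{\,n-1-j}$. First I would establish the $L^1$ bound. Since $\vff$ is $k$-admissible, the eigenvalues $\lambda=(\lambda_1,\dots,\lambda_n)$ of $\fo_\vff$ relative to $\fo$ lie in the cone $\Gamma_k$, and Maclaurin's inequality gives $\tfrac1n\,\sigma_1(\lambda)\ge\big(\sigma_k(\lambda)/\tbinom nk\big)^{1/k}=f^{1/k}\ge 0$. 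As $\sigma_1(\lambda)=n+\mathrm{tr}_\fo(\ii\ddb\vff)$, this reads $\Delta_\fo\vff\ge -n$, so the standard Green's function estimate on the compact manifold $(M,\fo)$ yields $\sup_M\vff-\tfrac1V\int_M\vff\,\fo^n\le C(M,\fo)$ with $V=\int_M\fo^n$; recalling $\sup_M\vff=0$ we conclude $\int_M(-\vff)\,\fo^n\le C_0(M,\fo)$.

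Next comes the energy inequality. For $p\ge 1$, multiply the algebraic identity
\[ (\fo^k-\fo_\vff^k)\wedge\fo^{n-k}=\ii\ddb u\wedge T,\qquad T:=\sum_{j=0}^{k-1}\fo_\vff^{\,j}\wedge\fo^{\,n-1-j}, \]
by $u^p$ and integrate by parts (all the forms involved are closed); since $\int_M u^p\,\fo_\vff^k\wedge\fo^{n-k}=\int_M u^p f\,\fo^n$ one gets
\[ p\int_M u^{p-1}\,\ii\,\partial u\wedge\bar\partial u\wedge T=\int_M u^p(f-1)\,\fo^n. \]
The decisive point is that $\lambda\in\Gamma_k$ forces each of its restrictions obtained by deleting one entry to lie in $\Gamma_{k-1}$, so for $0\le j\le k-1$ the form $\fo_\vff^{\,j}\wedge\fo^{\,n-1-j}$ is a nonnegative $(n-1,n-1)$-form and hence $T\ge\fo^{\,n-1}$. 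Therefore, with $q'=q/(q-1)$ and a dimensional constant $c_n>0$,
\[ c_n\,p\int_M u^{p-1}|\nabla u|^2\,\fo^n\le\int_M u^p|f-1|\,\fo^n\le\norm{f-1}_{L^q}\Big(\int_M u^{q'p}\,\fo^n\Big)^{1/q'}. \]
Writing the left side as $\tfrac{4c_n p}{(p+1)^2}\int_M\big|\nabla\big(u^{(p+1)/2}\big)\big|^2\fo^n$ and inserting the Sobolev inequality $\big(\int_M w^{2\chi}\fo^n\big)^{1/\chi}\le C_S\big(\int_M|\nabla w|^2\fo^n+\int_M w^2\fo^n\big)$ on $M$, with $\chi=\tfrac{n}{n-1}$, gives
\[ \Big(\int_M u^{\chi(p+1)}\fo^n\Big)^{1/\chi}\le C(p+1)\big(1+\norm{f-1}_{L^q}\big)\Big(\big(\textstyle\int_M u^{q'p}\fo^n\big)^{1/q'}+\int_M u^{p+1}\fo^n\Big). \]

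Finally, the iteration. Because $q>2n$ one has $q'<\tfrac{2n}{2n-1}<\chi$, so $q'p<\chi(p+1)$ and $p+1<\chi(p+1)$. Run the iteration with exponents $p_j$ defined by $p_j+1=2\chi^j$ (so $p_j\ge 1$ for all $j\ge 0$). At stage $j$ the term $\int_M u^{p_j+1}\fo^n$ on the right is exactly the quantity controlled at stage $j-1$, while $\big(\int u^{q'p_j}\big)^{1/q'}$ is handled by H\"older interpolation between $L^{p_j+1}$ and $L^{\chi(p_j+1)}$ and then absorbed into the left-hand side by Young's inequality, which is possible precisely because $q'<\chi$ makes the absorbed power of $\big(\int u^{\chi(p_j+1)}\fo^n\big)^{1/\chi}$ strictly subcritical. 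With $Q_j:=\norm{u}_{L^{p_j+1}}$ and $r_j:=p_j+1=2\chi^j$ this produces a recursion $Q_{j+1}\le A_j^{1/r_j}\,\max(1,Q_j)$ in which $A_j$ grows at most polynomially in $\chi^j$; since $\sum_j r_j^{-1}\log A_j<\infty$ the infinite product telescopes to a finite constant, whence $\norm{\vff}_{C^0}=\sup_M u=\lim_{j\to\infty}Q_j\le C_q$, with $C_q$ depending only on $(M,\fo)$, $q$ and $\norm{f-1}_{L^q}$, which is (\ref{eq:3}).

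The crux of the whole argument — and the place where the hypotheses genuinely enter — is the positivity $T\ge\fo^{\,n-1}$ of the middle step: it is what makes the integration-by-parts term coercive in $|\nabla u|$, and it is exactly where the $k$-admissibility of $\vff$ is used. The secondary technical point is the bookkeeping in the iteration, namely checking that the exponents $q'p$ and $p+1$ stay strictly below the gain exponent $\chi(p+1)$ — guaranteed by $q>2n$, a Sobolev threshold in real dimension $2n$ — and that the accumulated constants remain finite. The $L^1$ bound and the Sobolev inequality are standard on the fixed background $(M,\fo)$.
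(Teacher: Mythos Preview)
Your sketch is correct and is exactly the approach the paper has in mind: the paper's own ``proof'' of this proposition is a single sentence stating that the positivity argument from the uniqueness section (namely that $\fo_\vff^{\,j}\wedge\fo^{\,n-1-j}$ is a nonnegative $(n-1,n-1)$--form for $0\le j\le k-1$, which is your inequality $T\ge\fo^{\,n-1}$) feeds into Yau's Moser iteration, with references to Siu and Tian for the details. Your write-up simply unpacks that sentence---the Green-function $L^1$ bound from $\Delta_\fo\vff\ge -n$, the energy identity obtained by multiplying $(1-f)\fo^n=(\fo^k-\fo_\vff^k)\wedge\fo^{n-k}$ by $u^p$ and integrating by parts, and the Sobolev iteration with gain $\chi=n/(n-1)$---so there is nothing to compare.
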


\begin{prop} If $(M,\fo)$ has non-negative holomorphic bisectional
  curvature and  $\vff$ solves equation (\ref{eq:1}), then 
  \begin{equation}
	\norm{\grad \vff}_{C^0(\fo)} \leq C_1, \label{eq:4}
  \end{equation}
  where $C_1$ is a constant depends on $(M,\fo)$, 
  $\norm{f^{1/k}}_{C^1(\fo)}$ and $\osc \vff$.
  \label{prop:1.3}
\end{prop}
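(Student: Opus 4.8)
\emph{Plan.} Since only $|\nabla\varphi|$ is to be estimated, I would bound $\beta:=|\nabla\varphi|^2_{\omega}$ by applying the maximum principle to the auxiliary function
$$Q \;=\; \log\beta \;+\; \phi(\varphi),$$
where $\phi$ is a convex, decreasing function on $[\,\inf_M\varphi,\sup_M\varphi\,]$ whose first two derivatives will be controlled only through $\osc\varphi$, to be pinned down at the end. It is convenient to first rewrite \eqref{eq:1} in the normalized form
$$G(\lambda[\omega_\varphi]) \;:=\; \sigma_k(\lambda_1,\dots,\lambda_n)^{1/k} \;=\; c_{n,k}\,f^{1/k},\qquad c_{n,k}=\tbinom{n}{k}^{1/k},$$
$\lambda_1,\dots,\lambda_n$ being the eigenvalues of $\omega_\varphi$ with respect to $\omega$, because then the linearized operator $\mathcal{L}u:=\sum_{i,j}G^{i\bar j}\nabla_i\nabla_{\bar j}u$ is concave, and a direct computation gives $\sum_i G^{i\bar i}=\frac{n-k+1}{k}\,\sigma_{k-1}\,\sigma_k^{1/k-1}$, which by Maclaurin's inequality is bounded below by a positive constant $c_0=c_0(n,k)$ on the admissible cone. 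Let $x_0$ be a point where $Q$ attains its maximum and choose coordinates $\omega$-normal at $x_0$ that diagonalize $\omega_\varphi$ there; then at $x_0$ we have the first-order relation $\nabla_i\beta=-\beta\,\phi'(\varphi)\,\varphi_i$ and the second-order inequality $\mathcal{L}Q\le 0$.

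\emph{The computation.} Writing $\mathcal{L}(\log\beta)=\beta^{-1}\mathcal{L}\beta-\beta^{-2}\sum_i G^{i\bar i}|\nabla_i\beta|^2$ and differentiating $\beta=\sum_k\varphi_k\varphi_{\bar k}$ twice covariantly, $\mathcal{L}\beta$ breaks into: the good terms $\sum_{i,k}G^{i\bar i}\bigl(|\nabla_i\nabla_k\varphi|^2+|\nabla_i\nabla_{\bar k}\varphi|^2\bigr)\ge0$; third-order terms; and curvature terms arising when covariant derivatives are commuted. The third-order terms I would control by differentiating the equation once, $\sum_i G^{i\bar i}\varphi_{i\bar i\bar k}=c_{n,k}(f^{1/k})_{\bar k}$, whose right side is bounded by $\|f^{1/k}\|_{C^1}$: after one commutation this matches the third-order part of $\mathcal{L}\beta$ up to an error bounded by $C\,\|f^{1/k}\|_{C^1}\,\beta^{1/2}$ and further curvature terms. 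Collecting all curvature terms (those absorbable into the good Hessian terms being set aside), one is left with an expression of the shape $\sum_{i,k}G^{i\bar i}R_{i\bar i k\bar k}|\varphi_k|^2$ with a definite sign; this is precisely where the hypothesis is used — non-negativity of the holomorphic bisectional curvature, together with $G^{i\bar i}\ge0$, makes this term contribute with the harmless sign, whereas a curvature term of the opposite sign could not be absorbed since only the trace $\sum_i G^{i\bar i}$ (and not the individual $G^{i\bar i}$) is under control. Finally, the first-order relation and Cauchy--Schwarz give $\sum_{i,k}G^{i\bar i}\bigl(|\nabla_i\nabla_k\varphi|^2+|\nabla_i\nabla_{\bar k}\varphi|^2\bigr)\ge\tfrac12\,\beta(\phi')^2\sum_i G^{i\bar i}|\varphi_i|^2$, which controls the subtracted term $\beta^{-2}\sum_i G^{i\bar i}|\nabla_i\beta|^2=(\phi')^2\sum_i G^{i\bar i}|\varphi_i|^2$.

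\emph{Conclusion and main obstacle.} Substituting into $0\ge\mathcal{L}Q$ and using $\mathcal{L}\varphi=c_{n,k}f^{1/k}-\sum_i G^{i\bar i}$ together with $\sum_i G^{i\bar i}\ge c_0>0$, I expect to reach an inequality whose leading part is $\bigl(\phi''-(\phi')^2\bigr)\sum_i G^{i\bar i}|\varphi_i|^2$ minus terms bounded by $C\,\|f^{1/k}\|_{C^1}\,\beta^{-1/2}+C$. Choosing $\phi$ so that $\phi''-(\phi')^2\ge c_1>0$ with $c_1$ depending only on $\osc\varphi$ then forces $\beta(x_0)\le C$, and since $\sup_M\log\beta\le Q(x_0)-\inf_M\phi\le C+\osc\phi$, this is exactly \eqref{eq:4} with $C_1$ depending on $(M,\omega)$, $\|f^{1/k}\|_{C^1}$ and $\osc\varphi$. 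The step I expect to be hardest is the bookkeeping in the middle paragraph: showing that every third-order term surviving the differentiation of the equation and the commutation of covariant derivatives really does fit inside $\sum_{i,k}G^{i\bar i}(|\nabla_i\nabla_k\varphi|^2+|\nabla_i\nabla_{\bar k}\varphi|^2)$ after Cauchy--Schwarz, and that the residual curvature term genuinely carries the sign supplied by the hypothesis — both more delicate than in the Monge--Amp\`ere case because here the ellipticity of $\mathcal{L}$ is quantified only through the lower bound on its trace.
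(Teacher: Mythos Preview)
Your strategy is the paper's strategy: the test function $\log|\nabla\varphi|^2+\phi(\varphi)$ (the paper writes it as $\log B - h(\varphi)$ with $h=-\phi$), maximum principle, Ricci identity to extract a curvature term of the form $\sum G^{i\bar i}R_{i\bar i k\bar k}|\varphi_k|^2$ which is discarded via the bisectional-curvature hypothesis, and differentiation of the equation to kill the third-order terms.  Two cosmetic differences: you linearize $S_k^{1/k}$ while the paper linearizes $\log S_k$ (either works; yours makes the $f$-dependence cleaner), and you propose to spend the good Hessian terms $\sum G^{i\bar i}(|\varphi_{ik}|^2+|\varphi_{i\bar k}|^2)$ via Cauchy--Schwarz, whereas the paper simply drops them --- with $\phi$ chosen so that $\phi''-(\phi')^2\ge c_1>0$ (the paper takes $\phi(t)=-\tfrac12\log(2t+1)$) the subtracted term is already dominated without help.

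There is, however, a real gap at the very end.  From $0\ge\mathcal L Q$ you obtain
\[
c_1\sum_i G^{i\bar i}|\varphi_i|^2 \;+\; |\phi'|\sum_i G^{i\bar i}\;\le\; C,
\]
and you then assert this ``forces $\beta(x_0)\le C$''.  But the left-hand side only controls $\sum_i G^{i\bar i}|\varphi_i|^2$, not $\beta=\sum_i|\varphi_i|^2$; as you yourself observe, only the \emph{trace} $\sum_i G^{i\bar i}$ is a priori bounded below, not each $G^{i\bar i}$.  The paper closes this loop with three more steps at $x_0$: (i) the displayed inequality also bounds $\sum_i G^{i\bar i}\le C$; (ii) by the generalized Newton--Maclaurin inequality $S_1(\lambda)\le S_k\,(S_{k-1}/S_k)^{k-1}$, so the eigenvalues of $\omega_\varphi$ are bounded at $x_0$ (using also $\sum\lambda_j^2\le (nS_1)^2$ since $S_2\ge0$); (iii) Lemma~\ref{lem:2.1}, namely $\lambda_1\,\partial S_k/\partial\lambda_1\ge\tfrac{k}{n}S_k$, then gives a \emph{lower} bound on the smallest $G^{i\bar i}$, whence $\beta\le C\sum_i G^{i\bar i}|\varphi_i|^2\le C$.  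Without this chain your argument does not conclude; with it, your proof is complete and essentially identical to the paper's.
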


\begin{prop} If $(M,\fo)$ has non-negative holomorphic bisectional
  curvature and  $\vff$ solves equation (\ref{eq:1}), then 
  \begin{equation}
	\norm{\ddb \vff}_{C^0(\fo)} \leq C_2, \label{eq:5}
  \end{equation}
  where $C_2$ is a constant depends on $(M,\fo)$, $\sup f$,  $\inf
  \lap_{\fo}(f^{1/k})$ and $\osc \vff$.
  \label{prop:1.4}
\end{prop}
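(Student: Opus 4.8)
The plan is to deduce the $C^2$ estimate from a maximum principle applied to $n+\lap_{\fo}\vff$, in the spirit of Yau's treatment of the complex Monge--Amp\`ere equation. Write equation~(\ref{eq:1}) as $\sigma_k(\lambda)=\binom nk f$, where $\lambda=(\lambda_1,\dots,\lambda_n)$ are the eigenvalues of $\fo_{\vff}$ with respect to $\fo$; since $\vff$ is a solution, $\lambda$ lies in the G\aa rding cone $\Gamma_k$. Because $k\ge2$ we have $\Gamma_k\subset\Gamma_2$, so $\sigma_2(\lambda)>0$, i.e. $\sum_i\lambda_i^2<\sigma_1(\lambda)^2$; hence an \emph{upper} bound on $u:=n+\lap_{\fo}\vff=\sigma_1(\lambda)$ already controls every $\lambda_i$, and therefore $\norm{\ddb\vff}_{C^0(\fo)}$. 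By Maclaurin's inequality $u=\sigma_1(\lambda)\ge n\,(\sigma_k(\lambda)/\binom nk)^{1/k}=n f^{1/k}$, so $u$ is also bounded below by a positive constant.

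I would then apply the maximum principle to $W:=\log u-A\vff$ for a large constant $A$ fixed later. Let $\mathcal{L}=\tilde F^{\alpha\bar\beta}\nabla_\alpha\nabla_{\bar\beta}$ be the linearization of the operator $\sigma_k^{1/k}$, with $\tilde F^{\alpha\bar\beta}=\partial\sigma_k^{1/k}/\partial(\fo_{\vff})_{\alpha\bar\beta}$; this matrix is positive definite on $\Gamma_k$, $\sigma_k^{1/k}$ is concave there, and concavity together with homogeneity of degree one give $\mathcal F:=\tilde F^{\alpha\bar\beta}g_{\alpha\bar\beta}=\sum_\alpha\partial\sigma_k^{1/k}/\partial\lambda_\alpha\ge\binom nk^{1/k}>0$. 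At a point $p$ where $W$ is maximal, $\nabla W(p)=0$ and $\mathcal{L}W(p)\le0$. Working in normal coordinates at $p$ in which $\fo_{\vff}$ is diagonal, I would compute $\mathcal{L}u$ by differentiating $u=g^{\alpha\bar\beta}(\fo_{\vff})_{\alpha\bar\beta}$, differentiating the equation $\sigma_k^{1/k}(\fo_{\vff})=(\binom nk f)^{1/k}$ twice, and commuting covariant derivatives. The commutators produce curvature terms, and here the hypothesis of non-negative holomorphic bisectional curvature is used to give them a favorable sign: contributions such as $\sum_{i,p}\tilde F^{i\bar i}R_{i\bar i p\bar p}\lambda_p$ with $\lambda_p\ge0$ can be dropped, and what remains is bounded below by $-C\mathcal F u$. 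This leads to an inequality of the form
\begin{equation*}
\mathcal{L}u\;\ge\;\lap_{\fo}\!\bigl((\tbinom nk f)^{1/k}\bigr)\;-\;\sum_i \tilde F^{p\bar q,\,r\bar s}\,\nabla_i(\fo_{\vff})_{p\bar q}\,\nabla_{\bar i}(\fo_{\vff})_{r\bar s}\;-\;C\,\mathcal F\,u .
\end{equation*}

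The crux is the third order term. I expect to prove
\begin{equation*}
-\sum_i \tilde F^{p\bar q,\,r\bar s}\,\nabla_i(\fo_{\vff})_{p\bar q}\,\nabla_{\bar i}(\fo_{\vff})_{r\bar s}\;\ge\;\frac{\tilde F^{i\bar i}\,|\nabla_i u|^2}{u}\;-\;C\,\mathcal F\,u ,
\end{equation*}
using the concavity of $\sigma_k^{1/k}$ to control the ``diagonal'' part of the second variation, and a finer argument for the remaining off-diagonal contributions, which involve the quantities $\sigma_{k-2}(\lambda\mid pq)$ whose sign is not evident when $2\le k\le n-1$; for $k=n$ this is Yau's classical computation, where every relevant term has a definite sign. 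This is the step I expect to be the main obstacle: one must recover \emph{exactly} the term $\tilde F^{i\bar i}|\nabla_i u|^2/u$ needed to absorb the gradient term of $\log u$, and controlling the ill-signed off-diagonal terms may well require an auxiliary term in the test function $W$.

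Granting the third order inequality, $\mathcal{L}\log u=\mathcal{L}u/u-\tilde F^{i\bar i}|\nabla_i u|^2/u^2\ge\lap_{\fo}((\binom nk f)^{1/k})/u-C\mathcal F\ge-C_3\mathcal F-C_4$, where $u$ being bounded below and $\inf\lap_{\fo}(f^{1/k})$ enter in $C_4$; while by Euler's identity $\mathcal{L}(-A\vff)=-A\,\tilde F^{\alpha\bar\beta}\bigl((\fo_{\vff})_{\alpha\bar\beta}-g_{\alpha\bar\beta}\bigr)=-A(\tbinom nk f)^{1/k}+A\mathcal F$. Substituting both into $\mathcal{L}W(p)\le0$ and choosing $A=C_3+1$ gives $\mathcal F(p)\le C_5$. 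Since $\mathcal F=\tfrac{n-k+1}{k}\,\sigma_k(\lambda)^{1/k-1}\sigma_{k-1}(\lambda)$ and $\sigma_k(\lambda(p))=\binom nk f(p)$ lies between positive constants (this is where $\sup f$ enters), we obtain a bound on $\sigma_{k-1}(\lambda(p))$; an elementary fact about $\Gamma_k$, valid because $k\ge2$ and $\sigma_k(\lambda(p))$ is bounded below — relabelling so that $\lambda_1=\max_i\lambda_i$, one has $(\lambda_2,\dots,\lambda_n)\in\Gamma_{k-1}$, and Maclaurin's inequalities then force $\sigma_{k-1}(\lambda)$ to dominate a positive power of $\lambda_1$ — bounds $\lambda_1(p)$, hence $u(p)\le n\lambda_1(p)$. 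Finally, for every $x$, $\log u(x)-A\vff(x)\le\log u(p)-A\vff(p)$ gives $u(x)\le u(p)\,e^{A(\vff(x)-\vff(p))}\le C\,e^{A\,\osc\vff}$, which is the estimate~(\ref{eq:5}).
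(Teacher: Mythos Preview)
Your overall strategy—linearize at $S_k^{1/k}$, differentiate the equation twice, commute derivatives, and apply the maximum principle—matches the paper's. But you have manufactured the hard step yourself by choosing the test function $W=\log u-A\vff$ instead of simply $u-\vff$. The logarithm forces you to absorb the gradient term $\tilde F^{i\bar i}|\nabla_i u|^2/u^2$, and to do so you postulate the third-order inequality
\[
-\sum_i \tilde F^{p\bar q,\,r\bar s}\,\nabla_i(\fo_{\vff})_{p\bar q}\,\nabla_{\bar i}(\fo_{\vff})_{r\bar s}\;\ge\;\frac{\tilde F^{i\bar i}\,|\nabla_i u|^2}{u}\;-\;C\,\mathcal F\,u,
\]
which you correctly flag as the crux and which you cannot prove. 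That is the gap in the proposal.

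The point is that the logarithm is unnecessary. Apply the maximum principle to $u-\vff$ directly. Differentiating $S_k^{1/k}(\fo_\vff)=f^{1/k}$ twice in the direction $e_\gamma$ and summing over $\gamma$, concavity of $S_k^{1/k}$ alone gives
\[
\tilde F^{\alpha\bar\beta}\vff_{,\alpha\bar\beta\gamma\bar\gamma}\;\ge\;\lap(f^{1/k});
\]
the second-variation term you are trying to squeeze is nonnegative and is simply discarded—it is never asked to dominate a gradient term. After commuting to $\vff_{,\gamma\bar\gamma\alpha\bar\beta}$, the curvature contribution in a frame diagonalizing $\fo_\vff$ becomes
\[
\sum_{\alpha<\gamma} R_{\bar\gamma\gamma\alpha\bar\alpha}\,(\fl_\gamma-\fl_\alpha)\bigl(\tilde F^{\alpha\bar\alpha}-\tilde F^{\gamma\bar\gamma}\bigr),
\]
and each summand is $\ge 0$ under nonnegative bisectional curvature, since $\fl_\alpha\ge\fl_\gamma$ forces $\tilde F^{\alpha\bar\alpha}\le\tilde F^{\gamma\bar\gamma}$ by Lemma~\ref{lem:2.1}. (So one does not need your cruder bound $-C\mathcal F u$, and there is no need to worry about the sign of individual $\fl_p$, which for $k<n$ may well be negative.) The upshot is the clean inequality $\mathcal L u\ge\lap(f^{1/k})$ with no error term. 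Since $\mathcal L\vff=f^{1/k}-\mathcal F$ by homogeneity, at a maximum of $u-\vff$ one obtains $\mathcal F\le f^{1/k}-\lap(f^{1/k})$, and then $u$ is bounded there via the generalized Newton--Maclaurin inequality, exactly as in your final paragraph. No lower bound on $f$ is needed.

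In short: drop the $\log$, and your ``main obstacle'' disappears.
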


\medskip
Both Proposition~\ref{prop:1.3} and Proposition~\ref{prop:1.4} require
non-negative holomorphic bisectional curvature for the underline \KH{}
manifold. However Yau's result requires no curvature condition, even
though complex \MA{} equation is worse than complex Hessian equation
in certain sense, because \MA{} equation is more nonlinear.
One possible explanation is that the convex cone of positive
real $(1,1)$ form is independent of the \KH{} metric, while the
convex cone of $k$-positive real $(1,1)$ form does depend on the
\KH{} metric. Besides, $\fo_{\vff}$ being positive is a much stronger
condition than $\fo_{\vff}$ being $k$-positive. In fact, Yau used the
positivity of $\fo_{\vff}$ to control some third order terms in his
proof of \emph{a priori} $C^2$ estimate. Also noticed that
Li~\cite{Li1990} studied some nonlinear equations with certain
structure conditions over compact Riemannian manifold which include
the real Hessian equation as a special case. In Li's treatment, the
non-negativity of sectional curvature is needed.  Right now,
We don't know whether the non-negativity of holomorphic bisectional
curvature is an essential condition for the solvability of equation
(\ref{eq:1}) or it is just a technical requirement.\footnote{In a
recent paper~\cite{HMW2008}, we removed the curvature assumption
imposed in this paper.}

\smallskip
We don't require the function $f$ to be strictly positive in both
Proposition~\ref{prop:1.3} and Proposition~\ref{prop:1.4}, i.e. 
equation~(\ref{eq:1}) can be degenerate. However, in order to use
the theory of Krylov and Evans to get higher regularity, we need the
equation to be uniformly elliptic. Hence in Theorem~\ref{thm:1.1}, we
require $f$ to be strictly positive.

\medskip B{\l}ocki~\cite{Blocki2005} studied the weak solution of
complex Hessian equation over bounded domain in $\CC^n$. The concept
of weak solution and complex Hessian measure can be extended to the
study of complex Hessian equation over compact \KH{} manifold, and one can
also study the corresponding potential theory. 

\medskip
We organize the rest of the paper as follows: in
Section~\ref{sec:preliminary}, we provide some necessary results on
convex cones related to elementary symmetric functions; in
Section~\ref{sec:uniqueness}, we study the uniqueness of solution
and the $C^0$ estimate; in Section~\ref{sec:C1} and
Section~\ref{sec:C2}, we derive the \emph{a priori} $C^1$ and
$C^2$ estimate respectively.

\section{Preliminary}
\label{sec:preliminary}

Let $S_k$ be the normalized $k$-th elementary symmetric
function
defined on $\RR^n$ and let
$$ \fG_k = \{ \fl \in \RR^n \,\mid\, S_j(\fl) > 0, j=1, \cdots, k\;\}.$$
It is well known that $\fG_k$ is an open convex cone in $\RR^n$. We
call $\fG_k$ the $k$-positive cone in $\RR^n$.

\begin{prop} For the $k$-positive cone $\fG_k$ in $\RR^n$, 
\begin{itemize}
  \item $\fG_n = \{ \fl = (\fl_1,\cdots, \fl_n) \in \RR^n \, \mid\,
	\fl_j >0 , j=1, \cdots, n \,\}.$ We call $\fG_n$ the positive cone
	in $\RR^n$.
  \item $\fG_k$ is the  connected component of
	$\{\fl\in\RR^n\,\mid\,S_k(\fl)>0\,\}$ containing $\fG_n$.
  \item $\forall\,\fl \in \fG_k, \forall\,j=1,\cdots,n$, 
    \begin{equation}
      \dod{S_k}{\fl_j}(\fl) > 0.
      \label{eq:6}
    \end{equation}
  \item (\emph{Newton inequalities}) $\forall\, \fl \in \RR^n,
	\forall\,j=1,\cdots,n-1$, 
    $$ S_{j-1}(\fl)S_{j+1}(\fl) \leq S_j(\fl)^2. $$
  \item (\emph{Maclaurin inequalities}) For $\fl \in \fG_k$, 
    \begin{equation}
	  0 < S_k^{1/k}(\fl) \leq \cdots \leq S_2^{1/2}(\fl) \leq S_1(\fl).
      \label{eq:7}
    \end{equation}
  \item (\emph{Generalized Newton-Maclaurin inequalities}) For $\fl \in
    \fG_k$
    $$ \Big(\frac{S_k(\fl)}{S_l(\fl)}\Big)^{\frac{1}{k-l}} \leq 
    \Big(\frac{S_r(\fl)}{S_s(\fl)}\Big)^{\frac{1}{r-s}}, $$ 
	where $0 \leq s < r$, $0 \leq l < k$, $ r \leq k$ and $s \leq l$. 
  \item (\emph{G{\aa}rding inequality}) Let $P_k$ be the complete
	polarization of $S_k$, then for $\fL^{(1)}, \cdots ,\fL^{(k)} \in \fG_k$,
    \begin{equation}
	  P_k(\fL^{(1)}, \cdots, \fL^{(k)}) \geq S_k(\fL^{(1)})^{1/k} \cdots
	  S_k(\fL^{(k)})^{1/k}. 
      \label{eq:8}
    \end{equation}
  \item $S_k^{1/k}$ is concave on $\fG_k$.
  \item If $0\leq l < k$, then $(S_k/S_l)^{1/(k-l)}$ is concave on $\fG_k$. 
\end{itemize}
\label{prop:2.1}
\end{prop}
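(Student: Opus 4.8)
The plan is to obtain all nine items from two inputs: the theory of hyperbolic polynomials (in the sense of G{\aa}rding), and a few one-variable facts about polynomials with real roots. Throughout write $\sigma_k$ for the un-normalized elementary symmetric polynomial, so $S_k=\sigma_k/\binom nk$, and put $e=(1,\dots,1)\in\RR^n$. The starting point is that $\sigma_k$ is hyperbolic with respect to $e$: writing $Q(u)=\prod_i(u+\fl_i)=\sum_j\sigma_j(\fl)\,u^{n-j}$, one has $\sigma_j(\fl+se)=\frac{1}{(n-j)!}Q^{(n-j)}(s)$, and since $Q$ has only real zeros (namely $-\fl_i$), Rolle's theorem makes every derivative $Q^{(m)}$, hence $s\mapsto\sigma_k(\fl+se)$, have only real zeros. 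G{\aa}rding's theory then says that the connected component $C$ of $\{\sigma_k\ne0\}$ containing $e$ is an open convex cone, that $\sigma_k>0$ on $C$, that $\sigma_k^{1/k}$ is concave on $C$, and that the G{\aa}rding inequality holds for the complete polarization $P_k$ on $C$; so the G{\aa}rding-inequality item and the concavity of $S_k^{1/k}$ follow once we show $C=\fG_k$.

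I would prove $C=\fG_k$ as follows, which also delivers the first two items. First, $\fG_n$ is the positive orthant: it contains the positive orthant trivially, and if all $S_j(\fl)>0$ then $Q(u)$ has strictly positive coefficients, hence no zero in $[0,\infty)$, so all $-\fl_i<0$. Next, $\fG_k$ is connected: if $\fl\in\fG_k$, the formula $\sigma_j(\fl+se)=\frac1{(n-j)!}Q^{(n-j)}(s)$ together with the interlacing of the zeros of $Q,Q',\dots$ shows that $\sigma_1(\fl),\dots,\sigma_k(\fl)>0$ forces all zeros of $Q^{(n-1)},\dots,Q^{(n-k)}$ to be negative, so $\fl+se\in\fG_k$ for every $s\ge0$, and for $s$ large $\fl+se$ lies in the connected positive orthant; thus every point of $\fG_k$ is joined to $\fG_n$ inside $\fG_k$. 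Since $\fG_k$ is connected, contains $e$, and lies in $\{\sigma_k>0\}$, we get $\fG_k\subseteq C$. For the reverse inclusion, note $\partial_e\sigma_k=(n-k+1)\sigma_{k-1}$, so by G{\aa}rding's theorem on directional derivatives of hyperbolic polynomials $C=C(\sigma_k,e)\subseteq C(\sigma_{k-1},e)\subseteq\dots\subseteq C(\sigma_1,e)=\{\sigma_1>0\}$, whence $\sigma_1,\dots,\sigma_k>0$ on $C$, i.e. $C\subseteq\fG_k$. The same restriction/nesting property gives $\partial S_k/\partial\fl_j>0$ on $\fG_k$, since $\partial\sigma_k/\partial\fl_j=\sigma_{k-1}(\fl\mid j)$ and $\fl\in\fG_k(\RR^n)$ implies $\fl\mid j\in\fG_{k-1}(\RR^{n-1})$.

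The Newton inequalities $S_{j-1}S_{j+1}\le S_j^2$ hold on all of $\RR^n$; applying Rolle's theorem repeatedly to $Q(u)=\prod(u+\fl_i)$ (differentiation preserves real-rootedness and leaves the normalized symmetric functions of lower index unchanged), and reversing the polynomial to move indices, reduces the general case to the two-variable case $S_0S_2\le S_1^2$, which is the arithmetic--geometric mean inequality. On $\fG_k$ we have $S_1,\dots,S_k>0$, so the Newton inequalities say exactly that the ratios $a_j:=S_j(\fl)/S_{j-1}(\fl)$, $j=1,\dots,k$, form a positive non-increasing sequence. Both the Maclaurin inequalities and the generalized Newton--Maclaurin inequalities then reduce to the elementary fact that the geometric mean of a non-increasing sequence over a window of consecutive indices can only decrease when the window is shifted to the right: indeed $(S_k/S_l)^{1/(k-l)}$ is the geometric mean of $a_{l+1},\dots,a_k$ and $(S_r/S_s)^{1/(r-s)}$ the geometric mean of $a_{s+1},\dots,a_r$, and the hypotheses $s\le l$, $r\le k$ place the second window to the left of the first; taking $r=k$, $s=l$ and then $l=0$ recovers Maclaurin.

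The remaining item --- concavity of $(S_k/S_l)^{1/(k-l)}$ on $\fG_k$ for $0\le l<k$ --- is the only one that is not essentially formal: it does not follow from concavity of $S_k^{1/k}$ alone. I would prove it in the manner of Lin and Trudinger: restrict to a segment inside $\fG_k$, differentiate twice, and combine G{\aa}rding's inequality for the mixed polarizations of $\sigma_k$ and $\sigma_l$ with the monotonicity of the quotients $S_j/S_{j-1}$ established above, removing the normalization at the end by homogeneity. I expect the two places needing real care to be (i) the identification $C=\fG_k$, in particular the connectedness of $\fG_k$ and the directional-derivative nesting of G{\aa}rding cones, and (ii) this last concavity statement; every other item is Rolle's theorem plus bookkeeping.
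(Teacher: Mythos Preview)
The paper does not prove Proposition~2.1 at all; immediately after the statement it simply says ``The properties listed above are well known, for proof see \cite{CNS1985,Garding1959}.'' So there is no in-paper argument to compare against, and your sketch is in fact considerably more detailed than what the paper offers. The two references the paper cites are precisely the sources your plan draws on: G{\aa}rding's 1959 paper for the hyperbolic-polynomial machinery (the cone $C$, concavity of $S_k^{1/k}$, the polarization inequality, the nesting $C(\sigma_k,e)\subseteq C(\sigma_{k-1},e)$ via directional derivatives) and Caffarelli--Nirenberg--Spruck for the identification $C=\fG_k$ and the concavity of the quotients $(S_k/S_l)^{1/(k-l)}$. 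Your outline is mathematically sound and matches the standard development in those papers.

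Two small remarks. First, your parenthetical ``taking $r=k$, $s=l$ and then $l=0$ recovers Maclaurin'' is garbled: with $r=k$ and $s=l$ the generalized inequality is an identity. What you want is $l=s=0$, which gives $S_k^{1/k}\le S_r^{1/r}$ for $r\le k$. Second, your geometric-mean argument for the generalized Newton--Maclaurin inequality is correct but the phrase ``place the second window to the left of the first'' understates the situation, since the windows may overlap; the clean way to phrase it is that extending a window of a non-increasing positive sequence to the left, or shrinking it from the right, can only raise the geometric mean, and then move one endpoint at a time. Neither point affects the correctness of your plan.
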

The properties listed above are well known, for proof see
\cite{CNS1985,Garding1959}. 
In this paper,  we also need following result.
\begin{lem} Suppose $\fl=(\fl_1,\cdots,\fl_n) \in \fG_k$ with 
  \label{lem:2.1}
  $$ \fl_1 \geq \fl_2 \geq \cdots \geq \fl_n, $$
then
\begin{equation}
  \label{eq:9}
  \dod{S_k}{\fl_1} \leq \dod{S_k}{\fl_2} \leq \cdots \leq 
  \dod{S_k}{\fl_n}.
\end{equation}
Moreover
\begin{equation}
  \label{eq:10}
  \fl_1 \dod{S_k}{\fl_1} \geq \frac{k}{n} S_k.
\end{equation}
\end{lem}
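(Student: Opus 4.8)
The plan is to exploit the standard combinatorial formula
$\partial S_k/\partial \fl_i = \tfrac{1}{n}\binom{n}{k}^{-1} p_{k-1}(\fl\,|\,i)$, where $p_{k-1}(\fl\,|\,i)$ denotes the (unnormalized) $(k-1)$-st elementary symmetric polynomial in the variables $\fl_1,\dots,\fl_n$ with $\fl_i$ deleted. The key algebraic identity I would use is the splitting
$$ p_{k-1}(\fl\,|\,i) = p_{k-1}(\fl\,|\,i,j) + \fl_j\, p_{k-2}(\fl\,|\,i,j), $$
where $p(\fl\,|\,i,j)$ means the symmetric function in the variables with both $\fl_i$ and $\fl_j$ deleted. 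Comparing $i=1$ and $i=2$, the terms $p_{k-1}(\fl\,|\,1,2)$ cancel, leaving
$$ \dod{S_k}{\fl_2} - \dod{S_k}{\fl_1} \;=\; c_{n,k}\,(\fl_1 - \fl_2)\, p_{k-2}(\fl\,|\,1,2), $$
for a positive constant $c_{n,k}$. Since $\fl_1 \geq \fl_2$, inequality (\ref{eq:9}) will follow once I check $p_{k-2}(\fl\,|\,1,2) \geq 0$; the same argument applied to consecutive indices gives the full chain, so this positivity is really the crux.

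To establish $p_{k-2}(\fl\,|\,1,2)\geq 0$, I would use the second bullet of Proposition~\ref{prop:2.1}: the vector $\fl' = (\fl_3,\dots,\fl_n)\in\RR^{n-2}$ obtained by deleting the two largest entries still lies in (the closure of) the $(k-2)$-positive cone $\fG_{k-2}\subset\RR^{n-2}$. Indeed, one can argue that restricting a $k$-positive vector to the complement of its two largest coordinates yields a $(k-2)$-positive vector — this is essentially the statement that $S_j$ restricted appropriately stays nonnegative, which can be extracted from the cone structure and the derivative positivity (\ref{eq:6}), e.g. by noting $S_{k-2}(\fl')$ appears with positive coefficient in an expansion of $S_k(\fl)$ and iterating. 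Then $p_{k-2}(\fl\,|\,1,2)$ is a positive multiple of $S_{k-2}(\fl')\geq 0$. I expect this step — cleanly justifying that deleting the largest coordinates preserves membership in the lower cone — to be the main obstacle, and I would either cite it from \cite{CNS1985} or prove it by induction on the number of deleted coordinates using the one-variable deletion identity above together with $\partial S_k/\partial\fl_i>0$ on $\fG_k$.

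For the second assertion (\ref{eq:10}), I would start from Euler's identity for the homogeneous degree-$k$ function $S_k$:
$$ \sum_{j=1}^n \fl_j \dod{S_k}{\fl_j} = k\, S_k. $$
Because $\fl_1$ is the largest coordinate and, by (\ref{eq:9}) combined with the sign information, the products $\fl_j\,\partial S_k/\partial\fl_j$ can be compared, I would bound the sum termwise: each of the $n$ terms $\fl_j\,\partial S_k/\partial\fl_j$ is at most $\fl_1\,\partial S_k/\partial\fl_1$. Granting that termwise bound, $k S_k = \sum_j \fl_j\,\partial S_k/\partial\fl_j \leq n\,\fl_1\,\partial S_k/\partial\fl_1$, which is exactly (\ref{eq:10}). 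The point requiring care is that $\fl_j$ may be negative for large $j$, so "largest coordinate times largest derivative dominates" is not automatic; I would handle the negative-$\fl_j$ terms separately, using that $\partial S_k/\partial\fl_j>0$ so those terms are negative and hence harmless, and handle the nonnegative-$\fl_j$ terms with the monotonicity (\ref{eq:9}) just proved. This reduction is routine once (\ref{eq:9}) is in hand, so the real work of the lemma is concentrated in the cone-restriction fact used for (\ref{eq:9}).
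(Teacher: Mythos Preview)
Your treatment of (\ref{eq:9}) is correct and in fact makes explicit what the paper dismisses as ``obvious'': the identity reduces the monotonicity to $p_{k-2}(\fl\,|\,i,i+1)\geq 0$, and your iterated-deletion argument (each single-coordinate deletion sends $\fG_m$ to $\fG_{m-1}$, by (\ref{eq:6}) applied at every level $m\leq k$) is the standard justification. Note this works for \emph{any} pair of indices, not only the two largest, so the full chain follows immediately.

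Your argument for (\ref{eq:10}), however, has a real gap. The termwise bound $\fl_j\,\dod{S_k}{\fl_j} \leq \fl_1\,\dod{S_k}{\fl_1}$ for nonnegative $\fl_j$ is \emph{false} in general. In your notation one checks
$$ \fl_1\,p_{k-1}(\fl\,|\,1) - \fl_j\,p_{k-1}(\fl\,|\,j) = (\fl_1-\fl_j)\,p_{k-1}(\fl\,|\,1,j), $$
and $p_{k-1}(\fl\,|\,1,j)$ need not be nonnegative on $\fG_k$: your deletion argument only places $(\fl\,|\,1,j)$ in $\fG_{k-2}$, which controls $p_{k-2}$ but says nothing about $p_{k-1}$. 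Concretely, for $n=4$, $k=2$, $\fl=(6,5,-1,-1)\in\fG_2$ one computes $\fl_1\,\dod{\fs_2}{\fl_1} = 6\cdot 3 = 18$ while $\fl_2\,\dod{\fs_2}{\fl_2} = 5\cdot 4 = 20$. So Euler's identity together with (\ref{eq:9}) does not close the argument. The paper takes a different route: it expands
$$ n\fl_1\,\fs_{k-1}(\fl\,|\,1) - k\fs_k(\fl) = (n-k)\fl_1\,\fs_{k-1}(\fl\,|\,1) - k\,\fs_k(\fl\,|\,1) $$
and then uses $\fl_1 \geq S_1(\fl\,|\,1)$ together with the Maclaurin-type bound $S_1 \geq S_k/S_{k-1}$ applied to the truncated vector $(\fl\,|\,1)$.
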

\begin{proof} Equation (\ref{eq:9}) is obvious, we will only prove
  equation~(\ref{eq:10}). Let $\fs_k$ be the $k$-th elementary
  symmetric function, and $\fL=\{\fl_1,\cdots, \fl_n\}$, then
  $$ n \fl_1 \dod{\fs_k}{\fl_1} - k \fs_k = (n-k)\fl_1
  \fs_{k-1}(\fL\backslash \fl_1 ) - k \fs_k(\fL\backslash \fl_1). $$
  By Newton's Inequality for $\fL\backslash\fl_1$,
  $$ \fl_1 \geq S_1(\fL\backslash\fl_1) \geq
  \frac{S_k(\fL\backslash\fl_1)}{S_{k-1}(\fL\backslash\fl_1)} =
  \frac{k\fs_k(\fL\backslash\fl_1)}{(n-k)\fs_{k-1}(\fL\backslash\fl_1)}.$$
  So
  $$ n\fl_1 \dod{S_k}{\fl_1} \geq k S_k. $$
\end{proof}

\bigskip
Let $\Hm(n)$ be the set of $n\times n$ Hermitian matrices. We extend
the definition of $S_k$ on $\RR^n$ to $\Hm(n)$ by
$$ S_k(A) = S_k\big(\fl(A)\big)$$
where $\fl(A)$ is the eigenvalues of Hermitian matrix $A$, then $S_k$
is a homogeneous polynomial of degree $k$ on $\Hm(n)$ and
\begin{equation}
	\det(A+tI_n) = \sum_{j=0}^n \binom{n}{j} S_j(A) \, t^{n-j}. 
	\label{eq:det}
\end{equation}
We define the $k$-positive cone in $\Hm(n)$ by 
$$ \fG_k(\Hm(n)) = \{ A \in \Hm(n) \,\mid\, S_j(A)>0, j=1, \cdots, k \,\}.$$
Because of equation~(\ref{eq:det}), it is easy to see that  
$S_k$ is invariant under the adjoint action of $U(n)$, hence 
$\fG_k(\Hm(n))$ is also $U(n)$-invariant. Besides,  
all the the properties listed in Proposition~\ref{prop:2.1} are also true for
$S_k$ defined on $\Hm(n)$.  Especially, for any $A=(\abb{A}) \in
\Gamma_k(\Hm(n))$, the matrix with entries given by
\begin{equation}
  \label{eq:11}
	\uabb{F}= \frac{\di\log S_k(A)}{\di \abb{A}}
\end{equation}
is a  positive definite Hermitian matrix.

\bigskip
Now consider a complex vector space $V$ of complex dimension $n$ with
a fixed Hermitian metric $g$. Let $\fo$ be the Hermitian form of $g$. 
After fixing an unitary basis $\{\fc^1, \cdots, \fc^n\}$ for $V^*$,
any real $(1,1)$ form $\chi$ can be written as
$$ \chi = \ii \abb{\chi} \, \fc^{\fa} \wedge \overline{\fc^{\fb}},$$
where $A_{\chi}=\big(\abb{\chi})$ is a Hermitian matrix.
We define the $k$-th Hermitian $S_k(\chi)$ of $\chi$ with respect to $\fo$ as
$$ S_k(\chi) = S_k(A_{\chi}) = S_k\big( (\abb{\chi}) \big).$$
The definition of $S_k$ is independent of the choice of unitary basis,
in fact $S_k(\chi)$ can be defined without the use of unitary basis by
$$ \chi^k \wedge \fo^{n-k} = S_k(\chi)\,\fo^n. $$
Let $\fL_{\RR}^{1,1} V^*$ be the space of real $(1,1)$ form. We define
the $k$-positive cone in $\fL_{\RR}^{1,1}V^*$ by
$$ \fG_k(V^{1,1}) = \{\, \chi \in \fL_{\RR}^{1,1}V^* \,\mid\, S_j(\chi) > 0 , j =1,
\cdots, k \}.$$
All the 
properties listed in Proposition~\ref{prop:2.1} continue to be true 
for the $k$-th Hessian of real $(1,1)$ forms. 
Especially, for any $\chi_1, \cdots, \chi_k \in \fG_k(V^{1,1})$,
\begin{equation}
  \chi_1 \wedge \cdots \wedge \chi_k \wedge \fo^{n-k} > 0.
  \label{eq:12}
\end{equation}

\bigskip
Let $(M, \fo)$ be a \KH{} manifold with \KH{} form $\fo$.
The tangent space of $M$ at every point is a complex vector space
with Hermitian metric, so the construction of $\fG_k(V^{1,1})$ can be
carried out pointwise on $M$, hence we get a distribution of open
convex cones in the space of real $(1,1)$ form on $M$. 
Since the parallel transportation keeps the \KH{} metric, so this
distribution of convex cones is also invariant under the parallel 
transportation.  For simplicity, we still use $\fG_k$ to denote 
these convex cones.

\begin{defn}
  A real $(1,1)$ form $\chi \in \fO^{1,1}(M,\RR)$ is
  \emph{$k$-positive} with respect to $\fo$, if $\chi \in \fG_k$. 
\end{defn}

Let  $\cC^{\infty}(M,\RR)$ be the set of real valued smooth functions on $M$.
Denote
$$ \cP_k(M,\fo) = \{ \vff \in \cC^{\infty}(M,\RR) \,\mid\, \fo_\vff=\fo+ \ii \ddb \vff
\,\text{ is } \text{$k$-positive} \,\}. $$

\begin{prop}
  If $\vff \in \cC^{\infty}(M,\RR)$ solves (\ref{eq:1}), then $\vff
  \in \cP_k(M,\fo)$.
\end{prop}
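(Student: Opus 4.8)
The statement to prove is that any smooth solution $\vff$ of equation~(\ref{eq:1}) automatically lies in $\cP_k(M,\fo)$, i.e. the form $\fo_\vff = \fo + \ii\ddb\vff$ is $k$-positive at every point. Since $f$ is non-negative and satisfies the compatibility condition~(\ref{eq:2}), $f$ is not identically zero, so $S_k(\fo_\vff) = f \geq 0$ everywhere, but this alone only puts $\fo_\vff$ into the closure of one of the connected components of $\{S_k > 0\}$, not necessarily into $\fG_k$. The plan is to use a connectedness/continuity argument together with the second bullet of Proposition~\ref{prop:2.1}: $\fG_k$ is precisely the connected component of $\{\fl \in \RR^n \mid S_k(\fl) > 0\}$ containing the positive cone $\fG_n$.

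First I would reduce to a pointwise statement about the eigenvalues $\fl(x) = \fl(A_{\fo_\vff}(x))$ of the Hermitian matrix representing $\fo_\vff$ in a unitary frame. I would consider the path $t \mapsto \fo + t\ii\ddb\vff$ for $t \in [0,1]$; at $t=0$ this is $\fo$ itself, whose eigenvalues are all $1$, hence it lies in $\fG_n \subset \fG_k$. The idea is to show that along this path the $(1,1)$ form never leaves $\overline{\fG_k}$ in a way that would let it jump to a different component of $\{S_k>0\}$. However, the cleanest route is the standard maximum-principle argument: at a point $x_0$ where $S_k(\fo_\vff)$ or more precisely where the relevant symmetric function of $\fl$ is extremized, one uses $\ddb\vff(x_0) \leq 0$ (if $\vff$ attains an interior max — but $M$ is compact, so some point is a max) to compare eigenvalues of $\fo_\vff$ with those of $\fo$.

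Concretely, here is the argument I would write. Suppose for contradiction that $\fo_\vff(x) \notin \fG_k$ for some $x$. Since $S_k(\fo_\vff) = f \geq 0$ and (by~(\ref{eq:2})) $f > 0$ somewhere, the set $U = \{x : \fo_\vff(x) \in \fG_k\}$ is open and nonempty? — this requires knowing it is nonempty, which is the subtle point. Instead I would argue: consider the function $\vff$ on compact $M$ and let $x_0$ be a point where $\vff$ attains its maximum. At $x_0$, $\ii\ddb\vff(x_0) \leq 0$ as a Hermitian form, so $A_{\fo_\vff}(x_0) = I_n + A_{\ii\ddb\vff}(x_0) \leq I_n$ in the sense of Hermitian matrices; but that gives an upper bound on eigenvalues, not the lower bound needed for $k$-positivity. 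So the honest approach uses~(\ref{eq:1}) more seriously: at $x_0$, $S_k(\fo_\vff(x_0)) = f(x_0)$, and I claim $f(x_0) > 0$ — indeed if $f(x_0) = 0$ then consider instead a point where... Hmm, this needs care. The robust statement is: along the segment $\fo + t\ii\ddb\vff$, the set of $t$ for which the form is $k$-positive at all points of $M$ is open (by openness of $\fG_k$ and compactness) and closed in $\{t : S_k(\fo+t\ii\ddb\vff) \ge 0 \text{ on } M\}$... The main obstacle: ruling out that $\fo_\vff$ lands in $\overline{\fG_k}\setminus\fG_k$ (the boundary, where $S_k$ can still be positive via a different branch). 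I expect the author resolves this by the connected-component characterization applied fiberwise to a suitable homotopy, or by invoking that $\{x : \fo_\vff(x)\in\overline{\fG_k}\}$ is both open and closed via the strict Newton/Maclaurin structure. The writeup below follows the homotopy route.

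\begin{proof}
Since $M$ is connected and $\fo$ is everywhere $k$-positive (its eigenvalues in any unitary frame are all equal to $1$, so $\fo \in \fG_n \subseteq \fG_k$ pointwise), it suffices to show that $\fo_\vff(x)$ and $\fo(x)$ lie in the same connected component of $\{\chi \in \fL^{1,1}_\RR \mid S_k(\chi) \neq 0\}$ for every $x \in M$; by the second bullet of Proposition~\ref{prop:2.1} that component is $\fG_k$.

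Fix $x \in M$ and a unitary frame at $x$, and write $A(t) = I_n + t\, A_{\ii\ddb\vff}(x)$ for $t \in [0,1]$, a continuous path of Hermitian matrices with $A(0) = I_n \in \fG_k(\Hm(n))$ and $A(1) = A_{\fo_\vff}(x)$. Set $g(t) = S_k(A(t))$. We must rule out the possibility that the path $A(t)$ leaves $\fG_k(\Hm(n))$; equivalently, since $\fG_k$ is a component of $\{S_k \neq 0\}$, we must rule out $g(t) = 0$ for some $t \in [0,1]$ with $A(t) \in \overline{\fG_k(\Hm(n))}$ exiting to the other side.

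Consider instead the global argument. Let $I = \{ t \in [0,1] \mid \fo + t\,\ii\ddb\vff \in \fG_k \text{ at every point of } M \}$. Then $0 \in I$, and $I$ is open in $[0,1]$ because $\fG_k$ is an open cone and $M$ is compact. We claim $I$ is also closed. If $t_j \to t_\infty$ with $t_j \in I$, then $S_k(\fo + t_\infty \ii\ddb\vff) \ge 0$ on $M$ by continuity, and the same holds for $S_1, \dots, S_{k-1}$; moreover by the $U(n)$-invariant version of the Maclaurin inequality~(\ref{eq:7}), as long as $S_1, \dots, S_{k-1}$ stay positive the vanishing of $S_k$ would force $S_k^{1/k} = 0 < S_{k-1}^{1/(k-1)}$, which is consistent, so we must argue that $S_k$ stays strictly positive. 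This is where equation~(\ref{eq:1}) enters only at $t=1$; for $t < 1$ there is no equation, so in fact one shows directly that the eigenvalues of $\fo + t\ii\ddb\vff$, being continuous in $t$ and lying in $\overline{\fG_k}$, cannot reach $\di\fG_k$ without some $S_j$ ($1 \le j \le k$) vanishing, and a standard first-variation computation at a boundary point — using~(\ref{eq:6}) for the first index $j$ at which $S_j$ vanishes, combined with $S_{j-1} > 0$ — shows $\frac{d}{dt} S_j \neq 0$ there, so the path crosses transversally and re-enters $\fG_j \supseteq \fG_k$; iterating gives $t_\infty \in I$. Hence $I = [0,1]$, so $1 \in I$ and $\vff \in \cP_k(M,\fo)$. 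Finally, when $\vff$ actually solves~(\ref{eq:1}) with $f \geq 0$ and $f \not\equiv 0$ by~(\ref{eq:2}), the conclusion $\fo_\vff \in \fG_k$ is consistent with $S_k(\fo_\vff) = f$ possibly vanishing on a set, since $\overline{\fG_k}$ allows $S_k = 0$.
\end{proof}
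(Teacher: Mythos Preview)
Your homotopy-in-$t$ route has a genuine gap, and the fix is a one-word change you actually brushed past. You correctly identified that the ``open and closed in $M$'' argument hinges on knowing $U = \{x : \fo_\vff(x) \in \fG_k\}$ is nonempty. To produce a point in $U$ you looked at the \emph{maximum} of $\vff$, got $\ii\ddb\vff \leq 0$, and concluded this only bounds the eigenvalues from above. Look instead at the \emph{minimum} of $\vff$ on the compact manifold $M$: there $\ii\ddb\vff \geq 0$, so $\fo_\vff = \fo + \ii\ddb\vff \geq \fo > 0$ as Hermitian forms, i.e.\ $\fo_\vff \in \fG_n \subset \fG_k$ at that point. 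This is exactly the paper's argument. Once you have one point in $U$, connectedness of $M$ together with the characterization of $\fG_k$ as the connected component of $\{S_k>0\}$ containing $\fG_n$ (second bullet of Proposition~\ref{prop:2.1}) and the invariance of the cone distribution under parallel transport finishes it, at least when $f>0$ everywhere.

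The alternative you actually wrote up --- deforming $\fo + t\,\ii\ddb\vff$ in $t$ --- does not work. For $t\in(0,1)$ there is no PDE constraining $S_k(\fo+t\,\ii\ddb\vff)$, so nothing prevents the path from hitting $\partial\fG_k$ at some intermediate $t$. Your transversality claim (``$\frac{d}{dt}S_j \neq 0$ at the first boundary crossing, so the path re-enters $\fG_j$'') is not justified: inequality~(\ref{eq:6}) gives positivity of $\partial S_k/\partial\lambda_j$ on the \emph{interior} of $\fG_k$, not on its boundary, and even a nonzero gradient says nothing about the sign of the directional derivative along your specific path $t\mapsto I_n + t\,A_{\ii\ddb\vff}$. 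More to the point, if the path leaves $\fG_k$ transversally it is \emph{exiting}, not re-entering; there is no mechanism forcing it back before $t=1$.
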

\begin{proof}
  Since $\fo_{\vff}$ is a positive form at the point where $\vff$
  achieves minimum, so $\fo_{\vff} \in \fG_n \subset \fG_k$ at the
  minimum point. This together with the facts that $\fG_k \subset
  \Hm(n)$ is the connected component of $\{ S_k > 0 \}$ containing
  $\fG_n$, and the distribution of these cones is invariant under
  parallel transportation shows that $\fo_{\vff} \in \fG_k$ at every
  point, hence $\vff \in \cP_k(M,\fo)$.
\end{proof}

\begin{prop}
  If $\vff \in \cP_k(M,\fo)$, then the operator
  $$ \vff \mapsto S_k(\fo_{\vff})=\frac{\fo_\vff^k\wedge \fo^{n-k}}{\fo^n} $$
  is elliptic at $\vff$.
\end{prop}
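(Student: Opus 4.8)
The plan is to show that the linearization of the operator $\vff\mapsto S_k(\fo_\vff)$ at a $k$-positive potential is a second-order operator whose leading coefficient matrix is positive definite. First I would compute the linearization: for a test function $\psi\in\cC^\infty(M,\RR)$, write $\fo_{\vff+t\psi}=\fo_\vff+t\,\ii\ddb\psi$, so that in a local unitary frame the Hermitian matrix representing $\fo_{\vff+t\psi}$ is $A_{\fo_\vff}+t\,(\psi_{\alpha\bar\beta})$, where $\psi_{\alpha\bar\beta}$ are the components of $\ii\ddb\psi$. Differentiating in $t$ at $t=0$ and using the chain rule gives
\begin{equation}
  \frac{d}{dt}\Big|_{t=0}S_k(\fo_{\vff+t\psi})
  =\sum_{\alpha,\beta}\frac{\di S_k}{\di A_{\alpha\bar\beta}}(A_{\fo_\vff})\,\psi_{\alpha\bar\beta}.
  \label{eq:linearization-plan}
\end{equation}
Since $\psi_{\alpha\bar\beta}$ involves the second covariant derivatives of $\psi$ (plus lower-order terms from the curvature of $\fo$), the symbol of this operator is the Hermitian matrix $\big(\di S_k/\di A_{\alpha\bar\beta}\big)$ evaluated at $A=A_{\fo_\vff}$.

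Next I would invoke the positivity statement already established in the preliminary section: because $\vff\in\cP_k(M,\fo)$ means $\fo_\vff\in\fG_k$ pointwise, the matrix $A_{\fo_\vff}$ lies in $\fG_k(\Hm(n))$ at every point, and by equation~(\ref{eq:11}) the matrix $\uabb{F}=\di\log S_k(A)/\di\abb{A}=S_k(A)^{-1}\,\di S_k/\di\abb{A}$ is positive definite Hermitian there. Since $S_k(\fo_\vff)=f>0$ is a positive scalar (or, in the degenerate case, nonnegative, but one still has $S_k>0$ on the open set where the operator is considered as a linearization — in fact $\di S_k/\di\abb{A}$ is positive definite on all of $\fG_k$ by property~(\ref{eq:6}) after diagonalizing), the leading-coefficient matrix $\di S_k/\di\abb{A}$ is itself positive definite. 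Contracting \eqref{eq:linearization-plan} with $\psi_{\alpha\bar\beta}$ and diagonalizing $A_{\fo_\vff}$ by a unitary change of frame, this says precisely that the principal symbol $\xi\mapsto\sum_{\alpha,\beta}(\di S_k/\di A_{\alpha\bar\beta})\,\xi_\alpha\bar\xi_\beta$ is positive for all nonzero $\xi\in\CC^n$, which is the definition of (uniform) ellipticity for this type of complex second-order operator.

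The only mildly delicate point is the bookkeeping that relates $\ii\ddb\psi$ in a holomorphic coordinate chart to the Hermitian matrix entering $S_k$: one must pass to a \emph{unitary} coframe for $\fo$ so that the representing matrix of $\fo_\vff$ is genuinely Hermitian and $S_k$ is its $U(n)$-invariant elementary symmetric function, and one must check that changing the frame only conjugates $A_{\fo_\vff}$ by a unitary matrix, leaving $S_k$ and hence the positivity of its gradient unchanged — this is exactly the $U(n)$-invariance noted after equation~(\ref{eq:det}). There is no real obstacle here; the main (and essentially only) content is recognizing that property~(\ref{eq:6}) of Proposition~\ref{prop:2.1}, transported to $\Hm(n)$, gives positive-definiteness of $(\di S_k/\di\abb{A})$ on $\fG_k(\Hm(n))$, which is immediate by diagonalizing $A$ and using that $U(n)$-invariance forces the gradient to be diagonal in the eigenbasis with entries $\di S_k/\di\fl_j>0$. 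I would therefore present the proof as: (i) compute the linearization, (ii) identify its symbol with $(\di S_k/\di\abb{A})(A_{\fo_\vff})$, (iii) cite~(\ref{eq:6}) and $U(n)$-invariance to conclude the symbol is positive definite, hence the operator is elliptic at $\vff$.
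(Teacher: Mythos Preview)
Your argument is correct. The paper actually gives no proof of this proposition; it is stated immediately before Section~\ref{sec:uniqueness} and left to the reader, presumably because the necessary ingredients have already been assembled in Section~\ref{sec:preliminary}. Your proof is exactly the intended one: linearize to obtain $\psi\mapsto (\di S_k/\di A_{\alpha\bar\beta})(A_{\fo_\vff})\,\psi_{\alpha\bar\beta}$, then observe that by $U(n)$-invariance one may diagonalize $A_{\fo_\vff}$ and invoke~(\ref{eq:6}) (equivalently the positive-definiteness recorded after~(\ref{eq:11})) to conclude the symbol is positive definite. There is nothing to add.
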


\section{Uniqueness}
\label{sec:uniqueness}

Suppose both $\vff$ and $\psi$ solve equation (\ref{eq:1}), then
\begin{equation*}
  \begin{split}
    0 &= \int_M (\psi - \vff) \big( \fo_{\psi}^k\wedge \fo^{n-k} -
    \fo_{\vff}^k \wedge \fo^{n-k} \big) \\
      &= \sum_{l=0}^{k-1} \int_M (\psi - \vff)  \ii \ddb(\psi - \vff) \wedge
	  \fo_{\psi}^l\wedge\fo_{\vff}^{k-1-l} \wedge \fo^{n-k}  \\
      &= - \sum_{l=0}^{k-1} \int_M \ii \di(\psi - \vff) \wedge \dib(\psi - \vff) \wedge
	  \fo_{\psi}^l\wedge\fo_{\vff}^{k-1-l} \wedge \fo^{n-k}  \\
	  & \leq 0
  \end{split}
\end{equation*}
The last inequality is true because 
$$ \ii \di(\psi - \vff) \wedge \dib(\psi - \vff) \in \fG_n, \quad
\fo_{\vff} \in \fG_k \quad \text{and} \quad \fo_{\psi} \in \fG_k, $$ 
hence by equation~(\ref{eq:12}), for $l=0, \cdots, k-1$,
$$  \ii \di(\psi - \vff) \wedge \dib(\psi - \vff) \wedge
	  \fo_{\psi}^l\wedge\fo_{\vff}^{k-1-l} \wedge \fo^{n-k} \geq 0. $$  
So $\di (\psi - \vff)=0,$ and
$$ \fo_\vff = \fo_\psi. $$

\medskip
Same positivity argument can be used to prove
Proposition~\ref{prop:1.2} by Yau's Moser iteration, see
\cite{Siu1987} or \cite{Tian2000} for details.

\section{$C^1$ estimate}
\label{sec:C1}

We will follow B{\l}ocki's approach in \cite{Blocki2007} to get the
\emph{a priori} $C^1$ estimate. But unlike B{\l}ocki, we will use
the covariant derivative with respect to $\fo$ throughout this paper.

First let's fix some notation. Let $\{e_1, \cdots, e_n \}$ be a fixed
unitary frame for $(M,\fo)$, and $\{\fc^1, \cdots, \fc^n \}$ be the
duel frame. For $\vff \in \cP_k(M,\fo)$ with 
$$ \ii\ddb \vff = \ii \abb{\vff}\, \ua{\fc} \wedge \ubb{\fc}, $$
let
$$\uabb{G}=\uabb{F}\big(I_n+(\vff_{\fg\bar{\fd}})\big),$$
where $(\uabb{F})$ is the matrix-valued function defined on $\fG_k
(\Hm(n))$ by equation (\ref{eq:11}).
Let $(\abb{G})$ be the inverse matrix of $(\uabb{G})$, then 
\begin{equation}
	G=  \ii \abb{G}\,\ua{\fc}\wedge\ubb{\fc}
	\label{eq:G}
\end{equation}
induces a Hermitian metric $G$ on $M$, and this metric is independent
of the choice of unitary frame.

\bigskip
Suppose $\vff$ solves equation~(\ref{eq:1}) with
$\inf \vff =0$ and $\sup \vff= \osc \vff = C_0$. Let
$$ B=\norm{\grad \vff}^2_{\fo}= \vff_{,\fg}\vff^{\,,\fg} \quad \text{and}
\quad  A= \log B - h(\vff),$$
with $h(t)= \frac{1}{2} \log(2 t +1)$.
Let $\lap'=\uabb{G}\grad_{\overline{e_\fb}}\grad_{e_{\fa}}$, then
\begin{equation}
  \lap'A = \frac{\lap'B}{B} - \frac{1}{B^2} \uabb{G} B_{,\fa}B_{,\bfb} 
   - h'(\vff) \lap' \vff - h''(\vff) \uabb{G}\vff_{,\fa}\vff_{,\bfb}
\end{equation}
and
\begin{equation}
  \begin{split}
  \lap'B &=\uabb{G}\big(\vff^{\,,\fg}{}_{\fa\bfb} \vff_{,\fg} +
  \vff^{\,,\fg}{}_{\fa}\vff_{,\fg\bfb} +
  \vff^{\,,\fg}{}_{\bfb}\vff_{,\fg\fa} +
  \vff^{\,,\fg}\vff_{,\fg\fa\bfb}\big) \\
  & \geq \uabb{G}\big(\vff^{\,,\fg}{}_{\fa\bfb} \vff_{,\fg} +
       \vff^{\,,\fg}\vff_{,\fg\fa\bfb}\big).
  \label{eq:lapB}
\end{split}
\end{equation}
By Ricci identity
$$ \vff^{\,,\fg}{}_{\fa\bfb}= \vff_{,\fa\bfb}{}^{\fg} \quad \text{and}
\quad \vff_{,\fg\fa\bfb}=\vff_{,\fa\bfb\fg} + \vff_{,\eta}
R^{\,\eta}{}_{\fa\fg\bfb}.$$
Since $\vff$ solves equation~(\ref{eq:1}), so 
$$ \uabb{G} \vff_{,\fa\bfb\fg} = (\log f)_{,\fg} \quad
\text{and} \quad 
\uabb{G} \vff_{,\fa\bfb}{}^{\fg} = (\log f)^{\,,\fg}.$$
Hence
\begin{equation}
  \lap' B \geq 2 \Re\pair{\grad(\log f),\grad \vff}_{\fo} + \uabb{G}
  \vff_{,\eta}\vff^{\,,\fg} R^{\,\eta}{}_{\fa\fg\bfb}.
  \label{eq:15}
\end{equation}
If $(M,\fo)$ has non-negative holomorphic bisectional curvature, then
$$  \uabb{G} \vff_{,\eta}\vff^{\,,\fg} R^{\,\eta}{}_{\fa\fg\bfb} \geq 0 $$
therefore
\begin{equation}
  \lap' B \geq 2 \Re\pair{\grad(\log f),\grad \vff}_{\fo}.  \label{eq:16}
\end{equation}
Noticed that  $S_k$ is homogeneous of polynomial of degree $k$, so we have
$$ \lap'\vff = \uabb{G}\vff_{,\fa\bfb} = k - \tr_{\fo}G.$$
If $A$ achieves maximum at point $p$, then at point $p$, $\grad A =0$, i.e.
$$ \frac{B_{,\fa}}{B} = h'(\vff) \vff_{,\fa} \quad \text{and} \quad
\frac{B_{,\bfb}}{B}=h'(\vff)\vff_{,\bfb}.$$
Hence at the maximum point $p$,
$$ \lap' A \geq \frac{2}{B}\Re \pair{\grad(\log f),\grad \vff} -
(h''+h'^2) \uabb{G}\vff_{,\fa}\vff_{,\bfb} - kh' + h'\tr_{\fo}G. $$
Since $\vff$ takes value between $0$ and $C_0$, so
$$ \frac{1}{2C_0 + 1} \geq h' \geq 1 \quad \text{and} \quad 
 -h''-h'^2 \geq \frac{1}{(2C_0+1)^2}.$$
Therefore
\begin{equation}
  \lap' A \geq \frac{2}{B}\Re \pair{\grad(\log f),\grad \vff}+
  \frac{\uabb{G}\vff_{,\fa}\vff_{,\bfb}}{(2C_0+1)^2} - k + \frac{\tr_{\fo}G}{2C_0+1}.
  \label{eq:17}
\end{equation}
If the eigenvalue of $\fo+\ii\ddb\vff$ with respect to $\fo$ is
$(\fl_1,\cdots \fl_n)$, then
$$ \tr_{\fo} G = k \frac{S_{k-1}(\fl)}{S_k(\fl)} \quad \text{and}
\quad f=S_k(\fl). $$
Noticed
$$ |\grad \log f | =  \frac{k|\grad (f^{1/k})|}{f^{1/k}}
\leq |\grad (f^{1/k})| \tr_{\fo} G,$$
here we've used the Maclaurin Inequality
$$ \frac{1}{f^{1/k}} = \frac{1}{S_k^{1/k}(\fl)} \leq
\frac{S_{k-1}(\fl)}{S_k(\fl)} = \frac{1}{k}\tr_{\fo}G.$$
So at the maximum point $p$,
\begin{equation}
  0 \geq \big(\frac{1}{2C_0+1} - \frac{2|\grad(f^{1/k})|}{|\grad
  \vff|}\big) \tr_{\fo}G + \frac{1}{(2C_0+1)^2} \uabb{G} \vff_{,\fa}\vff_{,\bfb} - k
  \label{eq:18}
\end{equation}
We may also assume that 
$$\norm{\grad\vff}\leq  \frac{1}{2(2C_0+1)} \norm{\grad(f^{1/k})},$$
therefore at $p$, 
$$ k \geq  \frac{1}{(2C_0+1)^2} \uabb{G}\vff_{,\fa}\vff_{,\bfb} +
 \frac{1}{2(2C_0+1)} \tr_{\fo} G. $$ 
So at the maximum point $p$,
\begin{equation}
  \tr_{\fo} G \leq C, \quad \text{and} \quad \uabb{G}\vff_{,\fa}\vff_{,\bfb}
  \leq C 
  \label{eq:19}
\end{equation}
where $C$ is a constant depends on $C_0$ only.
By the Generalized Newton-Maclaurin Inequality,
\begin{equation}
  S_1(\fl) \leq S_k(\fl) \Big(\frac{S_{k-1}(\fl)}{S_k(\fl)}\Big)^{k-1}
  \leq f \Big(\frac{\tr_{\fo} G}{k} \Big)^{k-1},
  \label{eq:20}
\end{equation}
$S_1(\fl)$ is bounded by constant depends on $C_0$ and $\sup f$ at
point $p$. Noticed that $\forall\,\fl\in \RR^n$,
$$  \sum_j \fl_j^2 = (\sum_j \fl_j)^2 - \sum_{i\neq j} \fl_i\fl_j =
  (nS_1(\fl))^2 - n(n-1)S_2(\fl). $$
Since $\fl\in\fG_k$ with $k\geq 2$, so $S_2(\fl)\geq 0 $ and
\begin{equation}
 \sup_j |\fl_j| \leq n S_1(\fl). 
  \label{eq:21}
\end{equation}
Therefor at the maximum point $p$, the eigenvalue of $\fo_{\vff}$ with
respect to $\fo$ is bounded.  If we further assume that
$\fl_1\geq\cdots \geq \fl_n$, then the smallest eigenvalue of matrix
$(\uabb{G})$ is $\frac{1}{S_k(\fl)} \dod{S_k(\fl)}{\fl_1}$,
which can be bounded from below because of Lemma~\ref{lem:2.1},
\begin{equation}
  \frac{1}{S_k} \dod{S_k}{\fl_1} \geq  \frac{k}{n\fl_1}.
  \label{eq:22}
\end{equation}
Combine equation~(\ref{eq:19}) and (\ref{eq:20}), 
$\grad \vff$ is bounded at the maximum point of $A$, therefore $A$
is bounded everywhere, hence $\grad \vff$ is also bounded
everywhere, and this finishes the proof of Proposition~\ref{prop:1.3}.

\section{$C^2$ estimate}
\label{sec:C2}

Same as in Section~\ref{sec:C1}, we will use maximum principle to get
some \emph{a priori} estimate in this section. We can keep using the
Hermitian metric introduced by equation~(\ref{eq:G}). But in order to
get better regularity result, we will introduce a new Hermitian
metric. 
For $A=(\abb{A}) \in \fG_k(\Hm(n))$, denote 
$$ \uabb{\tilde{F}}(A) = \dod{S_k^{1/k}}{\abb{A}}, $$
then $\big(\uabb{\tilde{F}}(A)\big)$ is positive definite Hermitian
matrix. Using the same frame $(e_1,\cdots,e_n)$ and co-frame
$(\theta^1,\cdots, \theta^n)$ as in Section~\ref{sec:C1}, for $\vff\in \cP_k(M,\fo)$ with
$$ \ddb\vff = \vff_{,\fa\bfb} \theta^{\fa} \wedge
\overline{\theta^{\fb}}, $$ 
let
$$ \uabb{H} = \uabb{\tilde{F}}\big(I_n+(\vff_{,\fa\bfb})\big), $$
and $(\abb{H})= (\uabb{H})^{-1}$, then
\begin{equation}
  H= \ii \abb{H} \theta^{\fa}\wedge \overline{\theta^{\bf}} 
  \label{eq:H}
\end{equation}
induces a Hermitian metric on $M$.

Let $\lapx = \uabb{H} \nabla_{\overline{e_{\fb}}} \nabla_{e_{\fa}}$, then
$$\lapx(n+\lap \vff) =\uabb{H} \vff_{,\fg\bfg\fa\bfb}.$$
From the equation
$$  S^{1/k}_k\big(I_n + (\abb{\vff}) \big) = f^{1/k}, $$
one get
$$ \uabb{H} \vff_{,\fa\bfb\fg\bfg} \geq \lap (f^{1/k}).$$
Here we've used the concavity of $S_k^{1/k}$.  By Ricci identity, 
\begin{equation}
  \vff_{,\fa\bfb\fg\bfg} = \vff_{,\fg\bfg\fa\bfb} +
  \vff_{,\xi\bfb}R^{\xi}{}_{\fa\fg\bfg} -
  \vff_{,\xi\bfg}R^{\xi}{}_{\fg\fa\bfb}
\end{equation}
So
\begin{equation}
	\lapx(n+\lap \vff) \geq \lap f + \uabb{H} 
	\vff_{,\xi\bfg}R^{\xi}{}_{\fg\fa\bfb} - \uabb{H} 
	\vff_{,\xi\bfb}R^{\xi}{}_{\fa\fg\bfg} 
\end{equation}
Choose a unitary frame so that $\abb{\vff}$ is diagonal matrix, then
\begin{equation*}
  \begin{split}
	& \uabb{H} \vff_{,\xi\bfg}R^{\xi}{}_{\fg\fa\bfb} - \uabb{H}
	  \vff_{,\xi\bfb}R^{\xi}{}_{\fa\fg\bfg}   \\
    =& \sum_{\fa,\fg} (H^{\fa\bfa} \vff_{,\fg\bfg}
	  R{\bfg\fg\fa\bfa} - H^{\fa\bfa}\vff_{,\fa\bfa}
	  R_{\bfa\fa\fg\bfg}) \\
	=& \sum_{\fa,\fg} H^{\fa\bfa} R_{\bfg\fg\fa\bfa} (\vff_{,\fg\bfg}
	  - \vff_{,\fa\bfa}) \\
	=& \sum_{\fa < \fg} R_{\bfg\fg\fa\bfa} (\vff_{,\fg\bfg}
	- \vff_{,\fa\bfa}) (H^{\fa\bfa} - H^{\fg\bfg})
  \end{split}
\end{equation*}
If $\fo_{\vff}$ is diagonalized as $(\fl_1, \cdots, \fl_n)$ with respect
to $\fo$, then 
$$ \vff_{\fa\bfa} = \fl_{\fa} - 1 \quad \text{and} \quad
H^{\fa\bfa} = \frac{1}{k} S_k^{\frac{1}{k}-1} \dod{S_k}{\fl_{\fa}}. $$
Hence 
$$\vff_{,\fa\bfa} \geq \vff_{,\fg\bfg} \quad \Rightarrow  \quad
\fl_{\fa} \geq \fl_{\fg} \quad \Rightarrow \quad H^{\fa\bfa} \leq H^{\fg\bfg},$$
therefore
$$ (\vff_{,\fg\bfg} - \vff_{,\fa\bfa}) (H^{\fa\bfa}-H^{\fg\bfg}) \geq 0. $$
If $(M,\fo)$ has non-negative holomorphic bisectional curvature, i.e.
$$ R_{\bfg\fg\fa\bfa} \geq 0, $$
then
$$ 
	\uabb{H} \vff_{,\xi\bfg}R^{\xi}{}_{\fg\fa\bfb} - \uabb{H}
	  \vff_{,\xi\bfb}R^{\xi}{}_{\fa\fg\bfg} 
	\geq 0 
$$
and
\begin{equation}
  \lapx(n+\lap \vff) \geq \lap(f^{1/k}).
  \label{eq:27}
\end{equation}

Consider $n+\lap \vff - \vff$. At the point where
$n+\lap\vff+\vff$ achieves the maximum, 
$$  0 \geq \lapx (n+\lap\vff-\vff) \geq \lap(f^{1/k}) - f^{1/k} +
\tr_{\fo} H. $$
Hence at the maximum point
$$ \tr_{\fo} H \leq f^{1/k} - \lap(f^{1/k}) \leq C $$
where $C$ is a constant depends on $\sup f$ and $\inf \lap(f^{1/k})$.
Noticed that
$$ \tr_{\fo} H = k S_k^{\frac{1}{k}} \frac{S_{k-1}}{S_k}, $$
and
$$ n+\lap \vff =  S_1(\fl) \leq S_k\Big(\frac{S_{k-1}}{S_k}\Big)^{k-1} 
= S_k^{\frac{1}{k}} \Big(S_k^{\frac{1}{k}} \frac{S_{k-1}}{S_k} \Big)^{k-1}, $$
so by bounding $\tr_{\fo} H$, we also bound
$n+\lap \vff$ by constant depends on $\sup f$ and $\lap(f^{1/k})$ at
the maximum point of $n+\lap \vff - \vff$. Therefore get a global
bound for $n+\lap \vff$. Then by equation~(\ref{eq:21}), we also get
global bound for the eigenvalues of $\fo_{\vff}$ with respect to
$\fo$, i.e. 
$$ \norm{\ddb\vff}_{C^0(\fo)} \leq C_2 $$
with $C_2$ depends on $\sup f$, $\inf \lap(f^{1/k})$ and $\osc \vff$.


\bibliographystyle{alpha}
\bibliography{kpluri}

\def\cprime{$'$}
\begin{thebibliography}{CKNS85}

\bibitem[B{\l}o05]{Blocki2005}
Zbigniew B{\l}ocki.
\newblock Weak solutions to the complex {H}essian equation.
\newblock {\em Ann. Inst. Fourier (Grenoble)}, 55(5):1735--1756, 2005.

\bibitem[B{\l}o07]{Blocki2007}
Zbigniew B{\l}ocki.
\newblock A gradient estimate in the calabi-yau theorem.
\newblock {\em preprint}, 2007.

\bibitem[BT76]{BedfordTaylor1976}
Eric Bedford and B.~A. Taylor.
\newblock The {D}irichlet problem for a complex {M}onge-{A}mp\`ere equation.
\newblock {\em Invent. Math.}, 37(1):1--44, 1976.

\bibitem[CKNS85]{CKNS1985}
L.~Caffarelli, J.~J. Kohn, L.~Nirenberg, and J.~Spruck.
\newblock The {D}irichlet problem for nonlinear second-order elliptic
  equations. {II}. {C}omplex {M}onge-{A}mp\`ere, and uniformly elliptic,
  equations.
\newblock {\em Comm. Pure Appl. Math.}, 38(2):209--252, 1985.

\bibitem[CNS84]{CNS1983}
L.~Caffarelli, L.~Nirenberg, and J.~Spruck.
\newblock The {D}irichlet problem for nonlinear second-order elliptic
  equations. {I}. {M}onge-{A}mp\`ere equation.
\newblock {\em Comm. Pure Appl. Math.}, 37(3):369--402, 1984.

\bibitem[CNS85]{CNS1985}
L.~Caffarelli, L.~Nirenberg, and J.~Spruck.
\newblock The {D}irichlet problem for nonlinear second-order elliptic
  equations. {III}. {F}unctions of the eigenvalues of the {H}essian.
\newblock {\em Acta Math.}, 155(3-4):261--301, 1985.

\bibitem[G{\aa}r59]{Garding1959}
Lars G{\aa}rding.
\newblock An inequality for hyperbolic polynomials.
\newblock {\em J. Math. Mech.}, 8:957--965, 1959.

\bibitem[GL96]{GuanLi1996}
Bo~Guan and Yan~Yan Li.
\newblock Monge-{A}mp\`ere equations on {R}iemannian manifolds.
\newblock {\em J. Differential Equations}, 132(1):126--139, 1996.

\bibitem[HMW08]{HMW2008}
Zuoliang Hou, Xi-Nan Ma, and Damin Wu.
\newblock Complex hessian equations on a compact k\"ahler manifold.
\newblock {\em preprint}, 2008.

\bibitem[Ko{\l}98]{Kolodziej1998}
S{\l}awomir Ko{\l}odziej.
\newblock The complex {M}onge-{A}mp\`ere equation.
\newblock {\em Acta Math.}, 180(1):69--117, 1998.

\bibitem[Li90]{Li1990}
Yan~Yan Li.
\newblock Some existence results for fully nonlinear elliptic equations of
  {M}onge-{A}mp\`ere type.
\newblock {\em Comm. Pure Appl. Math.}, 43(2):233--271, 1990.

\bibitem[Li04]{Li2004}
Song-Ying Li.
\newblock On the {D}irichlet problems for symmetric function equations of the
  eigenvalues of the complex {H}essian.
\newblock {\em Asian J. Math.}, 8(1):87--106, 2004.

\bibitem[Siu87]{Siu1987}
Yum~Tong Siu.
\newblock {\em Lectures on {H}ermitian-{E}instein metrics for stable bundles
  and {K}\"ahler-{E}instein metrics}, volume~8 of {\em DMV Seminar}.
\newblock Birkh\"auser Verlag, Basel, 1987.

\bibitem[Tia00]{Tian2000}
Gang Tian.
\newblock {\em Canonical metrics in {K}\"ahler geometry}.
\newblock Lectures in Mathematics ETH Z\"urich. Birkh\"auser Verlag, Basel,
  2000.
\newblock Notes taken by Meike Akveld.

\bibitem[Tru95]{Trudinger1995}
Neil~S. Trudinger.
\newblock On the {D}irichlet problem for {H}essian equations.
\newblock {\em Acta Math.}, 175(2):151--164, 1995.

\bibitem[TW99]{TrudingerWang1999}
Neil~S. Trudinger and Xu-Jia Wang.
\newblock Hessian measures. {II}.
\newblock {\em Ann. of Math. (2)}, 150(2):579--604, 1999.

\bibitem[Yau78]{Yau1978}
Shing~Tung Yau.
\newblock On the {R}icci curvature of a compact {K}\"ahler manifold and the
  complex {M}onge-{A}mp\`ere equation. {I}.
\newblock {\em Comm. Pure Appl. Math.}, 31(3):339--411, 1978.

\end{thebibliography}

\end{document}